\newtheorem{theorem}{Theorem}[section]
\newtheorem{lem}[theorem]{Lemma}
\newtheorem{cor}[theorem]{Corollary}
\theoremstyle{definition}
\newtheorem{ex}[theorem]{Example}
\theoremstyle{remark}
\newtheorem{rem}[theorem]{Remark}
\numberwithin{equation}{section}
\begin{document}
\title[On the functional equation $f(x+1)=g(x)f(x)$
]{Remarks on the functional equation $f(x+1)=g(x)f(x)$ and a uniqueness theorem for the gamma function}
\author{M.H.Hooshmand}
\address{
Department of Mathematics, Shiraz Branch, Islamic Azad University, Shiraz, Iran}

\email{\tt hadi.hooshmand@gmail.com, hooshmand@iaushiraz.ac.ir}

\subjclass[2000]{40A30,39A10,39B22,26A99}

\keywords{Gamma-type function, asymptotic condition, Boher-Mollerup theorem, Limit summable function
 \indent }
\date{}

----------------------------------------------
\begin{abstract}
The topic of gamma type functions and related functional equation $f(x+1)=g(x)f(x)$ has been seriously studied
from the first half of the twentieth century till now.
Regarding unique solutions of the equation the asymptotic condition
$\displaystyle{\lim_{x \to \infty}}\frac{g(x+w)}{g(x)}=1$, for each $w>0$, is considered in many serial papers
including R. Webster's article (1997). Motivated by
the topic of limit summability of real functions (introduced by the author in 2001), we first show that the asymptotic condition
in the Webster's paper can be reduced to the convergence of the sequence $\frac{g(n+1)}{g(n)}$ to $1$, and then by removing the initial condition
$f(1)=1$ we generalize it. On the other hand,
Rassias and Trif
proved that if $g$ satisfies another appropriate asymptotic condition, then the equation
admits at most one solution $f$, which is eventually $\log$-convex of the second order. We also sow that without changing
the asymptotic condition for this case, a uniqueness theorem similar to the Webster's result
is also valid for eventually $\log$-convex solutions $f$ of the second order.
This result implies a new uniqueness theorem for the gamma function which states
the $\log$-convexity condition in the Bohr-Mollerup Theorem
can be replaced by $\log$-concavity of order two. At last, some important questions about them will be asked.

\end{abstract}
\maketitle
\section{Introduction and Preliminaries}
The gamma function $\Gamma(x)$ was introduced by Euler in 18th century and improved by Legendre, Gauss and Weierstrass.
In 1922, Bohr and Mollerup proved that the Gamma function is unique solution of the equation $f(x+1)=xf(x)$ with $f(1)=1$
($x>0$) if the
$\log$-convexity condition is considered. Regarding generalizing this equation the following functional equations were
studied:
$$
f(x+1)=g(x)f(x)\; , \; f(x+1)=g(x)+f(x);\;\;\; x>0.
$$
In 1949, Krull \cite{K} studied these equations and obtained many important results about them.
In \cite{Wb}, Webster studied the equation $f(x+1)=g(x)f(x)$ and its special solution namely  gamma type function.
In order to introduce ultra exponential and infra logarithm functions,
the author proposed the topic of {\em limit summability of  functions} (in 2001, \cite{MH}) and showed that
the subject ``gamma type functions'' is its sub-topic. In the topic, domain of the function does not need to be $\mathbb{R}^+$ or
$(a,\infty)$, and it is enough to be defined in positive integers.
We can state a summary of limit summability as follows.\\
Let $f$ be a real or complex function  with domain $D_f\supseteq \mathbb{N}^*=\{1,2,3, \cdots \}$. Put
$$\Sigma_f =\{ x|x+\mathbb{N}^* \subseteq D_f\},$$ and then for any $x\in
\Sigma_f$ and $n\in \mathbb{N}^*$ set $$R_n(f,x) \; = \;
R_n(x):=f(n)-f(x+n),$$
$$f_{\sigma _n}(x)=f_{\sigma _{\ell,n}}(x):=xf(n)+\sum ^n _{k=1} R_k(x).$$
The function $f$ is called limit summable
at $x_0\in \Sigma_f$ if the functional sequence $\{f_{\sigma
_n}(x)\}$ is convergent at $x=x_0$. The function $f$ is called
limit summable on the set $S\subseteq \Sigma_f$ if it is limit
summable at all the points of $S$.
\\
Now, put
$$
f _\sigma(x)=f_{\sigma _{\ell}}(x)=\lim_{n\rightarrow\infty}f_{\sigma _n}(x)\;
, \; R(x)=R(f,x)=\lim_{n\rightarrow\infty}R_n(f,x).
$$
Therefore
$D_{f_\sigma}
 = \{x\in \Sigma_f |f \; \mbox {is limit summable at} \; x\}$, and
 $f_{\sigma_\ell} =f_\sigma $ is the same limit function of $f_{\sigma _n}$ with
 domain $D_{f _\sigma}$.\\
The function $f$ is called limit summable if it is summable on
$\Sigma_f$, $R(1)=0$ and $D_f\subseteq D_f-1$. In this case the
function $f_\sigma$ is referred to as the limit summand function
of $f$. If $f$ is limit summable, then $D_{f _\sigma}=D_f-1$   and
$$ f_\sigma
(x)=f(x)+f_\sigma (x-1)\;\; ; \;\; \forall x\in D_f
$$
Therefore, if $f$ is limit summable then its limit summand function $f_\sigma$
satisfies the well-known difference functional equation $\varphi(x)-\varphi(x-1)=f(x)$
(e.g., see [2, 3, 4]). Hence,
$$f_\sigma(m)=f(1)+\cdots +f(m)=\sum_{j=1}^mf(j)\;\; ; \;\; \forall m\in \mathbb{N}^*.$$
If $f$ is limit summable then one may use the notation $\sigma_{\ell} (f(x))$ instead of
$f_{\sigma_\ell}(x)$.\\
Very often if a real function $f$ is limit summable on an interval of length $1$ and the sequence $R_n(f,1)$
is convergent,
then $f$ is limit summable on whole $\Sigma_f$.
One can see some criteria for existence of unique solutions of the above functional equation in \cite{MH}.
For example, if  $0<b\neq 1$ and $0<a<1$, then the real
function $f(x)=a^x+\log _b x$ is limit summable and
$$f_\sigma (x)=\frac{a}{a-1}(a^x-1)+\log _b \Gamma (x+1).$$
Now, let come back to gamma type functions satisfying the equation. The next theorem is one of the
main theorems about it.\\
{\bf Theorem A}([10, R.J. Webster]). Let the function $g:\mathbb{R}^+\rightarrow \mathbb{R}^+$ have the property
\begin{equation}
\displaystyle{\lim_{x \to \infty}}\frac{g(x+w)}{g(x)}=1\; , \;\;\; \mbox{for all } w>0.
\end{equation}
Suppose that
$f:\mathbb{R}^+\rightarrow \mathbb{R}^+$ is an
eventually $\log$-convex function satisfying the functional equation $f (x + 1) = g(x)f (x)$
 for $x>0$ and the initial condition $f(1)=1$. Then $f$ is uniquely
determined by $g$ through the equation
\begin{equation}
f(x)=\lim_{n\rightarrow \infty}\frac{g(n)\cdots g(1)g(n)^x}{g(n+x)\cdots g(x)}\; ; \; x>0.
\end{equation}
In equation $(1.2)$ the function $f$ is referred as the ``gamma type function of $g$''.
\section{Results for $\log$-convex and $\log$-concave solutions}
Now, we prove that the asymptotic condition of Theorem A can be weak such that the same consequent holds.
\begin{lem}
The asymptotic condition in Theorem A can be replaced by $\frac{g(n+1)}{g(n)}\rightarrow 1$ as $n\rightarrow \infty$.
\end{lem}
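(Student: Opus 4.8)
\emph{Proof plan.} The idea is that the discrete hypothesis $g(n+1)/g(n)\to 1$ is not being used in a vacuum: it comes bundled with the standing assumption that the solution $f$ is eventually $\log$-convex. I will show that these two facts \emph{together} already force the full asymptotic condition $(1.1)$; once $(1.1)$ is available, Theorem A applies word for word and delivers both the uniqueness of $f$ and the representation $(1.2)$. So the entire task is to recover $(1.1)$ from the weaker data.

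First I would pass from $f$ to $g$ through the functional equation. Since $f>0$ and $f(x+1)=g(x)f(x)$, we have $\log g(x)=\log f(x+1)-\log f(x)$; if $\log f$ is convex on a half-line $[a,\infty)$, then the elementary fact that the difference $\phi(\cdot+1)-\phi(\cdot)$ of a convex function $\phi$ is nondecreasing shows that $g$ is nondecreasing on $[a,\infty)$. Thus the eventual $\log$-convexity of the solution automatically upgrades $g$ to an eventually monotone function — this monotonicity is exactly the bridge between ``asymptotics along $\mathbb{N}^*$'' and ``asymptotics along $\mathbb{R}^+$''.

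Now fix $w>0$ and take $x$ large enough that $g$ is nondecreasing on $[x,\infty)$. Monotonicity gives $g(x+w)\ge g(x)$ directly, hence $\liminf_{x\to\infty} g(x+w)/g(x)\ge 1$. For the reverse inequality, sandwich by integers: $g(x+w)\le g(\lceil x+w\rceil)$ and $g(x)\ge g(\lfloor x\rfloor)$, so
$$\frac{g(x+w)}{g(x)}\le \frac{g(\lceil x+w\rceil)}{g(\lfloor x\rfloor)}=\prod_{k=\lfloor x\rfloor}^{\lceil x+w\rceil-1}\frac{g(k+1)}{g(k)}.$$
The number of factors is $\lceil x+w\rceil-\lfloor x\rfloor<w+2$, a bound independent of $x$, while the least index $\lfloor x\rfloor\to\infty$ with $x$; since each ratio $g(k+1)/g(k)\to 1$, a product of at most $\lfloor w\rfloor+2$ such ratios with all indices tending to infinity tends to $1$. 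Therefore $\limsup_{x\to\infty} g(x+w)/g(x)\le 1$, and $(1.1)$ holds for every $w>0$, completing the reduction to Theorem A.

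I do not expect a genuine obstacle: the convex-difference monotonicity and the $\varepsilon$-estimate for a bounded product of near-$1$ factors are both routine. The only thing to watch is that no regularity of $g$ beyond what convexity of $\log f$ provides is ever invoked, and that a single threshold $a$ (past which $\log f$ is convex, $g$ is nondecreasing, and the floor/ceiling comparisons are legitimate) is carried consistently through the estimates — bookkeeping rather than a real difficulty.
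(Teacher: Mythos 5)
Your proof is correct, but it reaches the lemma by a genuinely different mechanism than the paper. You first observe that $\log g(x)=\log f(x+1)-\log f(x)$ is the unit increment of the eventually convex function $\log f$, hence $g$ is nondecreasing on a half-line; combining this monotonicity with the telescoping bound $g(x+w)/g(x)\le g(\lceil x+w\rceil)/g(\lfloor x\rfloor)=\prod_{k=\lfloor x\rfloor}^{\lceil x+w\rceil-1}g(k+1)/g(k)$ (a bounded number of factors, each tending to $1$, with all indices tending to infinity) recovers the full asymptotic condition $(1.1)$, after which you invoke Theorem A as a black box. The paper never reconstructs $(1.1)$: it reruns the Bohr--Mollerup-style sandwich directly, using $\log$-convexity of $f$ on the four points $n$, $n+1$, $n+1+x-\lfloor x\rfloor$, $n+2$ together with the functional equation to trap the quantity $\frac{f(x)g(x+k)\cdots g(x)}{g(k)\cdots g(1)g(k)^x}$ (with $k=n-\lfloor x\rfloor$) between two products of ratios $g(n+i)/g(n+j)$ with $i-j$ bounded, and passes to the limit to obtain the representation $(1.2)$ outright. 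Your route is shorter and more modular, and it yields the sharper side remark that, in the presence of an eventually $\log$-convex solution, the discrete condition $g(n+1)/g(n)\to 1$ is actually \emph{equivalent} to $(1.1)$, so the apparent weakening costs nothing; the paper's route is self-contained (it does not reuse Webster's proof) and its explicit one-sided estimates are what later feed the $\liminf$ inequality recorded in Remark 3.2. The only bookkeeping point in your argument, which you already flag, is to take $x$ large enough that $\lfloor x\rfloor$ exceeds the threshold past which $\log f$ is convex.
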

\begin{proof}
Let $M$ be a number that $f$ is $\log$-convex on $(M,\infty)$ and fix an arbitrary
non-integer number $x>M$ (it is clear for positive integers). With do attention the hypothesis of the
theorem and using the $\log$-convexity of $f$ on
$[n,n+1,n+1+x-\lfloor x\rfloor,n+2]$ where $n$ is every integer with $n>\max\{M,\lfloor x\rfloor\}$,
one can write
\begin{align}
\left( \frac{f(n+1)}{f(n)}\right)^{x-\lfloor x\rfloor}
\leq
\frac{f(n+1+x-\lfloor x\rfloor)}{f(n+1)} \leq
\left( \frac{f(n+2)}{f(n+1)}\right)^{x-\lfloor x\rfloor}
\end{align}
\begin{align}
f(t+n)=g(t+n-1)\cdots g(t)f(t)\; ; \;\forall t>0
\end{align}
\begin{align}
f(n+1)=g(n)\cdots g(1)
\end{align}
Combining $(2.1)$, $(2.2)$ and $(2.3)$ and putting $k=n-\lfloor x\rfloor$ we find that
$$
\left( \frac{g(n)}{g(k)}\right)^{x}\frac{g(n)\cdots g(k+1)}{g(n)\cdots g(n)}\leq
\frac{f(x)g(x+k)\cdots g(x)}{g(k)\cdots g(1)g(k)^x}\leq
\left( \frac{g(n+1)}{g(n)}\right)^{x}\frac{g(n)\cdots g(k+1)}{g(n+1)\cdots g(n+1)}.
$$
Since $\frac{g(n+i)}{g(n+j)}\rightarrow 1$ as $n\rightarrow \infty$, for every integers $i,j$,
by letting $n\rightarrow \infty$ in the above last inequalities we arrive at $(1.1)$.
\end{proof}
In the next corollary, we remove
the initial condition and get a generalization of Theorem A.
\begin{cor} (A generalization of the Webster's theorem)
Let the function $g:\mathbb{R}^+\rightarrow \mathbb{R}^+$ have the property
$\frac{g(n+1)}{g(n)}\rightarrow l$ as $n\rightarrow \infty$. Suppose that
$f:\mathbb{R}^+\rightarrow \mathbb{R}^+$ is a function satisfying the functional equation
$f(x+1)=g(x)f(x)$ for $x>0$. If one of the following conditions hold:\\
(a) $f$ is eventually $\log$-convex and $0<l\leq 1$,\\
(b) $f$ is eventually $\log$-concave and $l\geq 1$,\\
 then
the function $\frac{1}{f(1)}f$ is uniquely determined by $g$ trough
the equation
\begin{equation}
\frac{1}{f(1)}f(x)=l^{\frac{x^2+x}{2}}\lim_{n\rightarrow \infty}
\frac{g(n)\cdots g(1)g(n)^x}{g(n+x)\cdots g(x)}
=\frac{l^{\frac{x^2+x}{2}}}{g(x)}
\lim_{n\rightarrow \infty}  \left( g(n)^x\prod_{k=1}^n\frac{g(k)}{g(x+k)} \right)\;\; ; \;\; x>0.
\end{equation}
Therefore, there exists a constant $c$ such that
\begin{equation}
f(x+1)
=c l^{\frac{x^2+x}{2}}e^{(\log g)_\sigma(x)}\;\; ; \;\; x>0.
\end{equation}
\end{cor}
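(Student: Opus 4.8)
The plan is to reduce the Corollary to Lemma~2.1 by a single rescaling that simultaneously normalizes the initial value and absorbs the geometric factor $l$. Since replacing $f$ by $f/f(1)$ changes neither the functional equation $f(x+1)=g(x)f(x)$ nor the eventual $\log$-convexity (resp.\ $\log$-concavity) of $f$, I would first assume $f(1)=1$; it then suffices to prove the first equality in $(2.4)$, the second being the elementary rewriting $\frac{g(n)\cdots g(1)g(n)^x}{g(n+x)\cdots g(x)}=\frac1{g(x)}g(n)^x\prod_{k=1}^n\frac{g(k)}{g(x+k)}$. Next I would set $h(x):=l^{-x(x-1)/2}f(x)$ for $x>0$; the point of the exponent $\tfrac12 x(x-1)$ is that its second difference equals $1$. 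A short computation gives $h(x+1)=\tilde g(x)h(x)$ with $\tilde g(x):=l^{-x}g(x)$ (a positive function on $\mathbb{R}^+$, since $l>0$), and $\frac{\tilde g(n+1)}{\tilde g(n)}=l^{-1}\frac{g(n+1)}{g(n)}\to 1$, while $h(1)=f(1)=1$. Moreover $\log h(x)=\log f(x)-\tfrac12(\log l)(x^2-x)$, and as $x\mapsto x^2-x$ is convex while $\log l\le 0$ in case (a) and $\log l\ge 0$ in case (b), the function $h$ is eventually $\log$-convex in case (a) and eventually $\log$-concave in case (b).

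With $h$ in hand, in case (a) I would apply Lemma~2.1 directly to the pair $(\tilde g,h)$ to obtain $h(x)=\lim_{n}\frac{\tilde g(n)\cdots\tilde g(1)\tilde g(n)^x}{\tilde g(n+x)\cdots\tilde g(x)}$; in case (b) I would instead apply Lemma~2.1 to $1/h$ together with $1/\tilde g$ (both are positive, $1/\tilde g$ still has consecutive ratios tending to $1$, and $1/h$ is eventually $\log$-convex) and then invert, arriving at the very same formula for $h$. Substituting $\tilde g(y)=l^{-y}g(y)$ and tallying the powers of $l$ — exponent $-\tfrac{n(n+1)}2$ from $\tilde g(n)\cdots\tilde g(1)$, exponent $-nx$ from $\tilde g(n)^x$, exponent $-(n+1)x-\tfrac{n(n+1)}2$ from $\tilde g(n+x)\cdots\tilde g(x)$, so net exponent $x$ — then gives $h(x)=l^{x}\lim_{n}\frac{g(n)\cdots g(1)g(n)^x}{g(n+x)\cdots g(x)}$, whence $f(x)=l^{x(x-1)/2}h(x)=l^{(x^2+x)/2}\lim_{n}\frac{g(n)\cdots g(1)g(n)^x}{g(n+x)\cdots g(x)}$, which is $(2.4)$. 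Uniqueness is then automatic, because the right-hand side of $(2.4)$ is built from $g$ alone ($l$ being determined by $g$).

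To deduce $(2.5)$, I would use that $g(n)^x\prod_{k=1}^n\frac{g(k)}{g(x+k)}=\exp\!\bigl(x\log g(n)+\sum_{k=1}^n(\log g(k)-\log g(x+k))\bigr)=e^{(\log g)_{\sigma_n}(x)}$; since the limit in $(2.4)$ exists and is positive, this already shows that $(\log g)_\sigma(x)=\lim_{n}(\log g)_{\sigma_n}(x)$ exists for every $x>0$ and that $\frac1{f(1)}f(x)=\frac{l^{(x^2+x)/2}}{g(x)}e^{(\log g)_\sigma(x)}$. Multiplying through by $g(x)$ and using $f(x+1)=g(x)f(x)$ yields $f(x+1)=f(1)l^{(x^2+x)/2}e^{(\log g)_\sigma(x)}$, i.e.\ $(2.5)$ holds with $c=f(1)$. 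The conceptual content is entirely carried by Lemma~2.1; the step I expect to demand the most care is the bookkeeping of the exponents of $l$ in the unwinding, together with the routine verification that in case (b) the reciprocal really transports the hypotheses of Lemma~2.1 as claimed.
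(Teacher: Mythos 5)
Your proposal is correct and follows essentially the same route as the paper: the identical substitution $\tilde g(x)=l^{-x}g(x)$, $h(x)=c^{-1}l^{(x-x^2)/2}f(x)$ reducing everything to Lemma~2.1, with the same bookkeeping of the powers of $l$ and the same rewriting via $(\log g)_{\sigma_n}$. The only addition is that you explicitly carry out case (b) by passing to reciprocals, a step the paper leaves implicit (its proof treats only case (a)).
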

\begin{proof}
If (a) holds, then putting $c:=f(1)$, $G(x):=l^{-x}g(x)$ and $F(x):=c^{-1}L^{\frac{x-x^2}{2}}f(x)$
we conclude that $\frac{G(n+1)}{G(n)}\rightarrow 1$ as $n\rightarrow \infty$,
$F$ is eventually $\log$-convex  and
$$
F(x+1)=G(x)F(x)\; ; \; x>0\; , \; F(1)=1.
$$
Therefore, Lemma 2.1 implies that
$$
F(x)=\frac{1}{G(x)}
\lim_{n\rightarrow \infty}  \left( G(n)^x\prod_{k=1}^n\frac{G(k)}{G(x+k)} \right)=
\frac{l^x}{g(x)}
\lim_{n\rightarrow \infty}  \left( g(n)^x\prod_{k=1}^n\frac{g(k)}{g(x+k)} \right)
$$
$$
=\frac{l^x}{g(x)}
\lim_{n\rightarrow \infty}  e^{x\log g(n)+\sum ^n _{k=1} \left(\log g(k)-\log g(x+k)\right)}=
\frac{l^x}{g(x)}
\lim_{n\rightarrow \infty}  e^{(\log g)_{\sigma_n}(x)}
.
$$
Hence we arrive at $(2.4)$.
\end{proof}
\begin{ex}
Let $a>0$ be a constant real number and consider the functional equation
$$f (x + 1) = xa^x f (x)\; ; \;\;\; x > 0.$$
If $a>1$,  then all eventually $\log$-convex solutions of the equation are
of the form
$$
f(x)=ca^{\frac{x^2-x}{2}}\Gamma(x),
$$
for all real constants $c$.
\end{ex}
\begin{rem}
In \cite{Wb} Webster denotes the class of all eventually $\log$-concave functions
$g:\mathbb{R}^+\rightarrow \mathbb{R}^+$ with the asymptotic property
$(1.1)$, by $\mathcal{G}$
and shows that if $g\in \mathcal{G}$ then the gamma type functional equation has a unique solution and
$f$ is eventually $\log$-convex. We remark that $\mathcal{G}$ is indeed the class
of all eventually $\log$-concave functions
$g:\mathbb{R}^+\rightarrow \mathbb{R}^+$ with the property $\frac{g(n+1)}{g(n)}\rightarrow 1$
as $n\rightarrow \infty$ (consider the $\log$-concavity of $g$ on
$[m,m+1,x,x+w,n,n+1]$ where $m,n$ are integers such that $m+1<x$ and  $x+w<n$).
Hence, in this case $(1.1)$ and convergence of the sequence are equivalent (in the related theorems).
\end{rem}
\section{Results for $\log$-convex and concave solutions of order two}
Let $I$ be an interval of the real line and $f:I\rightarrow \mathbb{R}$ a function. Define
the divided difference of $f$ at the points $x_0<x_1<\cdots<x_{n+1}$ in $I$ by
$$
[x_0;f]=f(x_0)\; , \; [x_0,x_1,\cdots,x_{k+1};f]=\frac{[x_1,\cdots,x_{k};f]-[x_0,\cdots,x_{k-1};f]}{x_k-x_0}
\; ; \; k\geq 1.
$$
The function $f$ is called convex of order $n$ or $n$-convex if for any system
$x_0<x_1<\cdots<x_{n+1}$ of points in $I$ it holds that $[x_0,x_1,\cdots,x_{n+1};f]\geq 0$, and
it is $n$-concave if $-f$ is $n$-convex.
A positive function $f$ is said to be eventually $\log$-convex of order $n$ or  $\log$-convex of order $n$ from a number on,
if $I$ contains a subinterval that is unbounded above and
on which the restriction of $f$ is convex of order $n$ (analogously for the concave case). If
$f:\mathbb{R}^+\rightarrow \mathbb{R}^+$ is eventually $\log$-convex of order $2$, then there is a number $M$
such that for every $u<v<w<z$ in $(M,+\infty)$ the following inequality holds
\begin{equation}
\left(\frac{f(w)}{f(v)}\right)^{z-u}\leq \left(\frac{f(z)}{f(u)}\right)^{w-v}
\end{equation}
A similar inequality holds for $\log$-concave functions of order $2$. Also, if $f$ is three times
differentiable on $I$, then $f$ is convex (resp. concave) of order two if and only if $f'''(x)\geq 0$
(resp. $f'''(x)\leq 0$) for all $x\in I$. Since
$$
(\log\Gamma(x))''=\sum_{k=0}^\infty\frac{1}{(x+k)^2},
\; (\log\Gamma(x))'''=-2\sum_{k=0}^\infty\frac{1}{(x+k)^3}\; ; \; x>0,
$$
then the gamma function is $\log$-convex  and also 2-$\log$-concave. More
information about $\Gamma(x)$ and $n$-convex functions can be seen in \cite{Mat, M, RW, Ro}.
\par
In \cite{TR}, Rassias and Trif use the $\log$-convexity of order two and replace the asymptotic
condition $(1.1)$ by
\begin{equation}
\displaystyle{\lim_{x \to \infty}}\frac{g(x+r)}{x^rg(x)}=a^r\; , \;\;\; \mbox{for some $a>0$ and all } r>0.
\end{equation}
for uniqueness conditions
to the solutions of the Gamma-type functional equation. Of course, their theorem gives a unique solution $f$ rather
than the gamma type function of $g$, and it is usable for some equations such as  $f(x+1)=\Gamma(x)f(x)$ but not $f(x+1)=xf(x)$.
Now, by using the techniques from \cite{MH}, we prove that the same condition $(1.1)$ can be used alongside
$\log$-convexity of order two for uniqueness conditions on the equation, and then we obtain a new
uniqueness theorem for the gamma function.
\begin{theorem}
Let $g:\mathbb{R}^+\rightarrow \mathbb{R}^+$ be a given function satisfying $(1.1)$. If
$f:\mathbb{R}^+\rightarrow \mathbb{R}^+$ is an
eventually $\log$-convex $($or $\log$-concave$)$ function of order two
satisfying the gamma type functional equation $f (x + 1) = g(x)f (x)$
, for $x>0$, with the initial condition $f(1)=1$, then $f$ is uniquely
determined by $g$ through the equation $(1.2)$.
\end{theorem}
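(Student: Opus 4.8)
The plan is to recast the desired identity $(1.2)$ as a single limit and then to squeeze that limit using the order-two convexity of $f$ together with the asymptotic condition $(1.1)$. First I would note that, since $f(1)=1$, iterating $f(x+1)=g(x)f(x)$ gives $f(n+1)=g(n)\cdots g(1)$ and $f(x+n+1)=f(x)\,g(x+n)\cdots g(x)$ for every $n\in\mathbb{N}^{*}$; inserting these into the product on the right of $(1.2)$ shows that $(1.2)$ is equivalent to
\[
\lim_{n\to\infty}\frac{f(x+n+1)}{f(n+1)\,g(n)^{x}}=1\qquad(x>0).
\]
Writing $x=m+p$ with $m=\lfloor x\rfloor$ and $p=x-\lfloor x\rfloor\in[0,1)$ and applying the functional equation once more, $f(x+n+1)=f(n+1+p)\prod_{j=1}^{m}g(n+j+p)$, so that
\[
\frac{f(x+n+1)}{f(n+1)\,g(n)^{x}}=\Bigl(\prod_{j=1}^{m}\frac{g(n+j+p)}{g(n)}\Bigr)\cdot\frac{f(n+1+p)}{f(n+1)\,g(n)^{p}} .
\]
By $(1.1)$ each factor $g(n+j+p)/g(n)\to1$, so everything reduces to proving $f(n+1+p)/\bigl(f(n+1)g(n)^{p}\bigr)\to1$; this is trivial for $p=0$, hence I may assume $0<p<1$.

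For this last limit I would use the order-two $\log$-convexity inequality $(3.1)$ (and, in the $\log$-concave case, its reverse). Fix $M$ so that $f$ is $\log$-convex (resp.\ $\log$-concave) of order two on $(M,\infty)$ and take $n$ large enough that $n-1,n,n+1,n+1+p,n+2$ all lie in $(M,\infty)$. Applying the order-two inequality to the quadruple $n<n+1<n+1+p<n+2$ and substituting $f(n+2)/f(n)=g(n+1)g(n)$ (read off from the functional equation) bounds $\log\frac{f(n+1+p)}{f(n+1)}$ on one side by $p\log g(n)+\tfrac{p}{2}\bigl(\log g(n+1)-\log g(n)\bigr)$; applying it to the quadruple $n-1<n<n+1<n+1+p$ and substituting $f(n+1)/f(n)=g(n)$ and $f(n)/f(n-1)=g(n-1)$ bounds the same quantity on the other side by $p\log g(n)+\bigl(\log g(n)-\log g(n-1)\bigr)$. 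Since $(1.1)$ forces $\log g(n+1)-\log g(n)\to0$ and $\log g(n)-\log g(n-1)\to0$, these two one-sided estimates sandwich $\log\frac{f(n+1+p)}{f(n+1)}-p\log g(n)$ between null sequences, so $f(n+1+p)/\bigl(f(n+1)g(n)^{p}\bigr)\to1$. Together with the reduction above this proves $(1.2)$; the $\log$-convex and $\log$-concave cases run in parallel, only the direction of the order-two inequality (hence of the two one-sided bounds) being interchanged.

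I expect the genuinely delicate step to be the construction just described: one must choose quadruples to which the order-two inequality really applies and whose endpoints differ by integers, so that the functional equation turns the resulting bounds into finite products of values of $g$; and one must use $(1.1)$ only in the form ``$\log g(n+j)-\log g(n)\to0$'' rather than ``ratios of logarithms tend to $1$'', which is precisely why the quadruples are arranged so that the error terms are bounded combinations of such differences. Everything else — the opening algebraic reduction, absorbing the integer part of $x$ via $(1.1)$, and the sign bookkeeping in the $\log$-concave case — is routine. Finally, taking $g(x)=x$, which satisfies $(1.1)$, and recalling that $\log\Gamma$ is concave of order two, one obtains the announced uniqueness theorem for $\Gamma$ in which the $\log$-convexity hypothesis of Bohr--Mollerup is replaced by $\log$-concavity of order two.
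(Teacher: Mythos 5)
Your argument is correct, and its engine is the same as the paper's: squeeze via the order-two $\log$-convexity inequality $(3.1)$ applied to two well-chosen quadruples near $n$, then let $n\to\infty$ using $(1.1)$. The difference lies in the decomposition. The paper applies $(3.1)$ to quadruples spanning the full increment, namely $n<n+1<n+1+x<n+2+\lfloor x\rfloor$ and $n+\lfloor x\rfloor<n+x<n+1+\lfloor x\rfloor<n+2+\lfloor x\rfloor$, obtaining two-sided bounds on $f(x)$ itself in terms of the product in $(1.2)$; the correction factors there consist of $\lfloor x\rfloor+2$ ratios of values of $g$ raised to powers such as $x/(2+\lfloor x\rfloor)$. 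You instead first rewrite $(1.2)$ as $f(x+n+1)/\bigl(f(n+1)g(n)^{x}\bigr)\to1$, peel off the integer part of $x$ by a direct application of $(1.1)$ (the fixed finite product $\prod_{j=1}^{m}g(n+j+p)/g(n)\to1$), and are then left with quadruples of bounded length inside $[n-1,n+2]$ whose error terms are the single differences $\tfrac{p}{2}\bigl(\log g(n+1)-\log g(n)\bigr)$ and $\log g(n)-\log g(n-1)$; I verified both one-sided bounds and they are as you state. This localizes the use of order-two convexity and lightens the bookkeeping, at the small cost of the preliminary algebraic reduction; note, though, that your closing remark that $(1.1)$ is used ``only in the form $\log g(n+j)-\log g(n)\to0$'' undersells your own reduction step, which needs $(1.1)$ for the non-integer shifts $w=j+p$ -- just as the paper's proof needs it for $w=x$ and $w=\{x\}$, so neither argument gets by with only the integer-shift hypothesis. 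The handling of the $\log$-concave case by reversing the inequality matches the paper's.
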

\begin{proof}
Let $M$ be a number that $f$ is $\log$-convex of order two on $(M,\infty)$ and fix an arbitrary
$x>M$. Then, for every integer $n>M$ the property $[n,n+1,n+1+x,n+2+\lfloor x\rfloor; \log f]\geq 0$
implies
$$
\left( \frac{f(n)}{f(n+2+\lfloor x\rfloor)}\right)^{x}
\leq
\left( \frac{f(n+1)}{f(n+1+x)}\right)^{2+\lfloor x\rfloor}
=\left( \frac{1}{f(x)}\cdot \frac{g(n)\cdots g(1)}{g(n+x)\cdots g(x)}\right)^{2+\lfloor x\rfloor}.
$$
Thus
$$
f(x)\leq \frac{g(n)\cdots g(1)}{g(n+x)\cdots g(x)}\left(g(n+1+\lfloor x\rfloor)\cdots g(n)\right)
^{\frac{x}{2+\lfloor x\rfloor}}
$$
$$
=\frac{g(n)\cdots g(1)g(n)^x}{g(n+x)\cdots g(x)}
\left(\frac{g(n+1+\lfloor x\rfloor)}{g(n)}\cdots \frac{g(n+1)}{g(n)}\cdot\frac{g(n)}{g(n)}\right)
^{\frac{x}{2+\lfloor x\rfloor}}.
$$
Also, by applying $[n+\lfloor x\rfloor,n+x,n+1+\lfloor x\rfloor,n+2+\lfloor x\rfloor; \log f]\geq 0$
for the case $x$ is non-integer, we conclude that
$$
\left( \frac{f(n+\lfloor x\rfloor)}{f(n+2+\lfloor x\rfloor)}\right)^{1-\{x\}}
\leq
\left( \frac{f(n+x)}{f(n+\lfloor x\rfloor+1)}\right)^{2}
=\left(\frac{g(n+x-1)\cdots g(x)f(x)}{f(n+\lfloor x\rfloor+1)}\right)^{2},
$$
hence
$$
f(x)\geq \frac{g(n)\cdots g(1)g(n)^x}{g(n+x)g(n+x-1)\cdots g(x)}\cdot
\frac{g(n+\lfloor x\rfloor)\cdots g(n+1)g(n+x)}{g(n)^x}
\left(\frac{1}{g(n+\lfloor x\rfloor+1)g(n+\lfloor x\rfloor)}\right)^{\frac{1-\{x\}}{2}}
$$
$$
=\frac{g(n)\cdots g(1)g(n)^x}{g(n+x)\cdots g(x)}\cdot
\frac{g(n+\lfloor x\rfloor)}{g(n)}\cdots \frac{g(n+1)}{g(n)}
\left( \frac{g(n+x)}{g(n)}\right)^{\{x\}}
\left( \frac{g(n+x)}{g(n+1+\lfloor x\rfloor)}\cdot \frac{g(n+x)}{g(n+\lfloor x\rfloor)}\right)^{\frac{1-\{x\}}{2}}.
$$
Therefore, the equation $(1.2)$ holds. Note that if $f$ is $\log$-concave of order two, then considering
the equation $f (x + 1)^{-1} = g(x)^{-1}f (x)^{-1}$ and the above argument, one can get the result.
\end{proof}

\begin{rem}
In view of the above proof, one find that if the sequence $\frac{g(n+1)}{g(n)}$ converges to $1$
then
$$
f(x)\leq \liminf_{n\to\infty}\frac{g(n)\cdots g(1)g(n)^x}{g(n+x)\cdots g(x)}\; ; \; x>0,
$$
and the equality holds if $(1.1)$ is satisfied. Hence, we remains the following
question.\\
{\bf Question I.} If the asymptotic condition $(1.1)$ in Theorem 3.1 is replaced by $\frac{g(n+1)}{g(n)}\to 1$ as $n\rightarrow \infty$,
then does the equality $(1.2)$ hold again?
\end{rem}
\subsection{A new uniqueness theorem for the gamma function}
It is easy to see that the difference functional equation $f(x+1)=xf(x)$ for $x>0$ (for the case $g(x)=x$) , with the initial condition  $f(1)=1$
has infinitely many solutions, having the property $f(n)=(n-1)!$, for every positive integer $n$. In
1922, Bohr and Mollerup proved that the Gamma function $\Gamma(x)$ is its unique solution if the
$\log$-convexity condition is considered. Thereafter, several uniqueness conditions were proved by using monotonicity, geometrically
convexity and so on (e.g., see \cite{A, G, M, RW}). In \cite{Mat} (2015), Matkowski proved some other uniqueness theorems
for it by using the Jensen convexity and some other conditions.
Now, as an interesting result of Theorem 3.1 we conclude that the condition $\log$-convexity in the Bohr-Mollerup Theorem
can be replaced by $\log$-concavity of order two.
\begin{cor}
The gamma function $\Gamma(x)$ is the only function $f$  that has the three properties\\
(a) $f (x + 1) = x f (x)$ for $x > 0$;\\
(b) $f (1) = 1$;\\
(c) $f$  is $\log$-concave of the second order.
\end{cor}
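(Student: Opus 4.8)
The plan is to deduce this directly from Theorem 3.1 by specializing to $g(x)=x$. First I would check that this $g$ fulfils the asymptotic hypothesis $(1.1)$: for every $w>0$ we have $\frac{g(x+w)}{g(x)}=\frac{x+w}{x}\to 1$ as $x\to\infty$, so Theorem 3.1 is applicable to the equation $f(x+1)=xf(x)$.

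Next I would record that $\Gamma$ itself satisfies the three listed properties, so that a solution does exist. Properties (a) and (b) are the classical functional equation and normalization of the gamma function, while property (c) was already observed in the paragraph preceding Theorem 3.1: since
$(\log\Gamma(x))'''=-2\sum_{k=0}^{\infty}(x+k)^{-3}\le 0$ for all $x>0$, the function $\log\Gamma$ is concave of order two on $\mathbb{R}^{+}$, in particular eventually $\log$-concave of order two.

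For uniqueness, suppose $f$ is any function with properties (a)--(c). Global $\log$-concavity of the second order is a fortiori eventual $\log$-concavity of the second order, so Theorem 3.1 applies and forces $f$ to be given by the limit formula $(1.2)$. I would then identify that limit in the present case: with $g(x)=x$ one has $g(n)\cdots g(1)=n!$, $g(n)^{x}=n^{x}$, and $g(n+x)\cdots g(x)=x(x+1)\cdots(x+n)$, so $(1.2)$ becomes
$f(x)=\lim_{n\to\infty}\frac{n!\,n^{x}}{x(x+1)\cdots(x+n)}$,
which is exactly Gauss's product representation of $\Gamma(x)$. Hence $f=\Gamma$, which together with the existence step above proves the corollary.

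There is no genuine obstacle here beyond bookkeeping, since the statement is a specialization of Theorem 3.1; the only two points worth making explicit are that hypothesis (c) (global $\log$-concavity of order two) is more than enough to trigger the \emph{eventual} $\log$-concavity of order two that Theorem 3.1 requires, and that the abstract limit $(1.2)$ coincides with the classical Gauss limit formula for $g(x)=x$, so that no separate verification that $\Gamma$ satisfies $(1.2)$ is needed.
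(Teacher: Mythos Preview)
Your proposal is correct and follows precisely the route the paper intends: the corollary is stated immediately after Theorem 3.1 as a direct specialization to $g(x)=x$, and the paper has already noted (via $(\log\Gamma)'''\le 0$) that $\Gamma$ is $2$-$\log$-concave, so your explicit verification of the asymptotic hypothesis, of the three properties for $\Gamma$, and of the identification of $(1.2)$ with Gauss's product are exactly the bookkeeping steps the paper leaves implicit.
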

\begin{rem}
Since the Gamma function is $n$-$\log$-convex (resp. $n$-$\log$-concave) for every odd (resp. even) positive integer
$n$, then we can replace the third condition (c) in the Bohr-Mollerup  Theorem  by each of the following conditions:\\
$(c_1)$  $f$  is $n$-$\log$-convex for \underline{every} odd  positive integers $n$,\\
$(c_2)$  $f$  is $n$-$\log$-concave for \underline{every} even  positive integers $n$.\\
Hence the next question arises:\\
{\bf Question II.} Can  the third condition (c) be replaced by one of the next conditions?:\\
 $(c_3)$  $f$  is $n$-$\log$-convex for \underline{some} odd  positive integer $n$,\\
$(c_4)$  $f$  is $n$-$\log$-concave for \underline{some} even  positive integer $n$,\\
 $(c_5)$  $f$  is $n$-$\log$-convex and $m$-$\log$-concave  for some odd  positive integer $n$
 and even positive integer $m$,\\
 $(c_6)$  $f$  is $n$-$\log$-convex and $(n+1)$-$\log$-concave  for some odd  positive integer $n$.
\end{rem}


\begin{thebibliography}{}
\bibitem{A}
E. Artin, {\it The Gamma Function}, Holt Rhinehart \& Wilson, New York,
1964; transl. by M. Butler from Einfuhrung un der Theorie der
Gamma fonktion, Teubner, Leipzig, 1931.
\bibitem{G}
D. Gronau and J. Matkowski, {\it Geometrical convexity and generalizations of
the Bohr-Mollerup theorem on the gamma function}, Math. Pannonica 4/2
(1993), 153-160.
\bibitem{MH}
M.H. Hooshmand, {\it Limit Summability of Real Functions}, Real
Analysis Exchange, 27(2001), 463-472.

\bibitem{K}
W. Krull, Bemerkungen zur Differenzengleichung $g(x+1)-g(x)=F(x)$. I. Math. Nachr. 1, 365–376 (1948)
\bibitem{Mat}
J. Matkowski, {\it Some characterizations of the Euler gamma function},
Rocky Mountain J. Math, 45(4)(2015), 1225-1231.
\bibitem{M}
M.E. Muldoon, {\it Some monotonicity properties and characterizations of
the gamma function}, Aequationes Math. 18 (1978), 54-63.
\bibitem{TR}
Themistocles M. Rassias, Tiberiu Trif,
{\it Log-convex solutions of the second order to the functional equation $f (x + 1) = g(x)f (x)$}, J. Math. Anal.
Appl. 331 (2007), 1440-1451.
\bibitem{RW}
R. Remmert, {\it Wielandt's theorem about the $\Gamma$- function}, Amer. Math.
Monthly 103 (1996), 214-220.
\bibitem{Ro}
A.W. Roberts, D.E. Varberg, {\it Convex Functions}, Academic Press, 1973.
\bibitem{Wb}
R.J. Webster, {\it Log-Convex Solutions to the Functional
Equation $f(x+1)=g(x)f(x)$: $\Gamma$ -Type Functions}, J. Math.Anal. Appl., 209 (1997), 605-623.
\end{thebibliography}
\end{document}